\theoremstyle{definition}
\newtheorem{theorem}{Theorem}[section]
\newtheorem{corollary}[theorem]{Corollary}
\newtheorem{proposition}[theorem]{Proposition}
\newtheorem{remark}[theorem]{Remark}
\begin{document}
\title{More results on weighted means}
\author{Shigeru Furuichi$^1$\footnote{E-mail:furuichi.shigeru@nihon-u.ac.jp} and Mehdi Eghbali Amlashi$^2$\footnote{E-mail:amlashi@mshdiau.ac.ir}\\
$^1${\small Department of Information Science,}\\
{\small College of Humanities and Sciences, Nihon University,}\\
{\small 3-25-40, Sakurajyousui, Setagaya-ku, Tokyo, 156-8550, Japan}\\
$^2${\small Department of Mathematics, Mashhad Branch, Islamic Azad University,
Mashhad, Iran}}
\date{}
\maketitle
{\bf Abstract.} 
We give a refined Young inequality which generalizes the inequality by Zou--Jiang. We also show the upper bound for the logarithmic mean by the use of the weighted geometric mean and the weighted arithmetic mean.   Furthermore, we show some inequalities among the weighted means. Based on the obtained essential scalar inequalities, we give some operator inequalities. In particular, we give  a generalization of the result by Zou--Jiang, that is, we show the operator inequalities with the  operator relative entropy  with the weighted parameter. Finally, we give the further generalized inequalities by the Tsallis operator relative entropy. 
\vspace{3mm}

{\bf Keywords : } 
Weighted logarithmic means, nested means, Heinz mean, Young inequality, relative operator entropy  and operator inequality
\vspace{3mm}

{\bf 2020 Mathematics Subject Classification : } 
Primary 26E60, Secondary 26D07, 39B62.
\vspace{3mm}


\section{Introduction}
	
		The logarithmic mean is defined by
\begin{equation} \label{lm_def}
L(a,b) := \frac{a-b}{\log a-\log b}= \int _0^1 a^tb^{1-t}dt,\quad a \neq b
\end{equation}
 for two positive numbers $a$ and $b$. (We usually define $L(a,b) = a$, if $a=b$.)
 It is known  the  inequality:
 \begin{equation}\label{intro_ineq01}
 L(a,b)  \leq \frac{2}{3}G(a,b)+ \frac{1}{3}A(a,b),
\end{equation}
which is called the classical P\'olya inequality in  \cite{Zou}, 
where $A(a,b):=\dfrac{1}{2}(a+b)$, $G(a,b):=\sqrt{ab}$.
		It is also known  the  inequality \cite{Lin1974}:
 \begin{equation}\label{intro_ineq02}
		 G(a,b) \le L(a,b) \le \left(\frac{a^{1/3}+b^{1/3}}{2}\right)^3.
\end{equation}
 We have the following relation \cite[Lemma 1.1]{FY2013}:
  \begin{equation}\label{intro_ineq03}
  		 L(a,b) \le \left(\frac{a^{1/3}+b^{1/3}}{2}\right)^3\le \frac{2}{3}G(a,b)+ \frac{1}{3}A(a,b),
  \end{equation}
which is a refinement of the P\'olya inequality.

		Recently, the weighted  logarithmic mean is introduced in \cite{PSMA2016} as
		\begin{equation}\label{wlm_def}
L_v(a,b) := \frac{1}{\log a-\log b}\left(\frac{1-v}{v}(a-a^{1-v}b^v)+\frac{v}{1-v}(a^{1-v}b^v-b)\right)
\end{equation}
 and studied in \cite{FN2020,MF2021}. In \cite[Corollary 2.2]{FN2020} and \cite[Theorem 2.2]{MF2021}, the following inequality is shown.
 \begin{equation}\label{sec01_ineq01}
 L_v(a,b) \le \frac{1}{2}G_v(a,b)+\frac{1}{2}A_v(a,b),
 \end{equation}
 where $A_v(a,b):=(1-v)a+vb$ is the weighted arithmetic mean and $G_v(a,b):=a^{1-v}b^v$ is the weighted geometric mean.
		However, the following inequality does not hold in general.
		 \begin{equation}\label{sec01_ineq02}
 L_v(a,b) \le \frac{2}{3}G_v(a,b)+\frac{1}{3}A_v(a,b),
 \end{equation}
		since we have counter-examples. See \cite{PSMA2016} for example.
		
We have the following relation for $a,b>0$ and $0\le v \le 1$,
		 \begin{equation}\label{order_four_means}
H_v(a,b) \le G_v(a,b)  \le L_v(a,b) \le A_v(a,b),
 \end{equation}
where $H_v(a,b):=\left\{(1-v)a^{-1}+vb^{-1}\right\}^{-1}$ is the weighted harmonic mean. The inequality $G_v(a,b)   \le A_v(a,b)$ is often called the Young inequality.
Many refinements and reverses for this inequality have been studied. See \cite[Chapter 2]{FM2020} for example. See also \cite{IA2021,K2017} for the recent advanced results.  In the paper \cite{FYM}, one of authors studied some inequalities on the weighted means, especially the weighted logarithmic mean. In this paper, we give the further results on the weighted mean and obtain some new inequalities for them.

\section{Main results}

We firstly give a new refinement of the Young inequality which is a generalization for the known result by using the weighted parameter $v \in [0,1]$.
\begin{theorem}\label{theorem_refine_Young01}
For $a,b>0$ and $0\le v \le 1$, we have
\begin{equation}\label{refine_Young_ineq01}
G_v(a,b)\le \left\{1+\frac{\mu^2}{2}\left(\log a-\log b\right)^2\right\}G_v(a,b) \le A_v(a,b),
\end{equation}
where $\mu:=\min\{1-v,v\}$.
\end{theorem}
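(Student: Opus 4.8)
The plan is the following. The left inequality in \eqref{refine_Young_ineq01} is immediate: $\frac{\mu^2}{2}(\log a-\log b)^2\ge 0$, so the bracketed factor is $\ge 1$, and $G_v(a,b)>0$. Thus the real content is the right inequality $\bigl(1+\frac{\mu^2}{2}(\log a-\log b)^2\bigr)G_v(a,b)\le A_v(a,b)$. Since $A_v$ and $G_v$ are positively homogeneous of degree one while $\log a-\log b$ is scale invariant, I may replace $(a,b)$ by $(a/b,1)$ and so assume $b=1$; moreover the inequality is unchanged under the simultaneous substitution $a\leftrightarrow b$, $v\leftrightarrow 1-v$ (which fixes $\mu$), so I may also assume $v\in[0,\tfrac12]$, i.e.\ $\mu=v$. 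The case $v=0$ is trivial (all three terms equal $a$), so take $v\in(0,\tfrac12]$, write $a=e^{s}$ with $s\in\mathbb R$, and divide through by $G_v(e^{s},1)=e^{(1-v)s}>0$. The claim then becomes
\[
F(s):=(1-v)e^{vs}+v\,e^{-(1-v)s}-1-\frac{v^{2}}{2}s^{2}\ \ge\ 0\qquad(s\in\mathbb R).
\]

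One checks at once that $F(0)=0$ and, since $F'(s)=v(1-v)e^{vs}-v(1-v)e^{-(1-v)s}-v^{2}s$, that $F'(0)=0$. Hence it is enough to prove $F''\ge 0$ on $\mathbb R$; then $F'$ is nondecreasing, so $F'\le 0$ on $(-\infty,0]$ and $F'\ge 0$ on $[0,\infty)$, and $F$ attains its global minimum $F(0)=0$ at the origin. A direct differentiation gives
\[
F''(s)=v(1-v)\bigl[v\,e^{vs}+(1-v)e^{-(1-v)s}\bigr]-v^{2},
\]
so everything reduces to the scalar inequality $v\,e^{vs}+(1-v)e^{-(1-v)s}\ge \frac{v}{1-v}$. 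This is where the work lies. I would prove it by applying the weighted arithmetic--geometric mean inequality $(1-v)X+vY\ge X^{1-v}Y^{v}$ (valid for $X,Y>0$) with $X=\frac{v}{1-v}e^{vs}$ and $Y=\frac{1-v}{v}e^{-(1-v)s}$; the exponential factors cancel on the right and one obtains
\[
v\,e^{vs}+(1-v)e^{-(1-v)s}\ \ge\ \Bigl(\tfrac{v}{1-v}\Bigr)^{1-v}\Bigl(\tfrac{1-v}{v}\Bigr)^{v}=\Bigl(\tfrac{v}{1-v}\Bigr)^{1-2v}\ \ge\ \frac{v}{1-v},
\]
the last step because $x^{t}\ge x$ whenever $0<x\le 1$ and $0\le t\le 1$, applied with $x=\frac{v}{1-v}\le 1$ and $t=1-2v\in[0,1]$. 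Consequently $F''(s)\ge v(1-v)\cdot\frac{v}{1-v}-v^{2}=0$, which closes the argument.

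The only genuine obstacle is this middle inequality $v\,e^{vs}+(1-v)e^{-(1-v)s}\ge\frac{v}{1-v}$ (equivalently, the lower bound $F''\ge 0$); the reductions and the endpoint analysis are routine. If one prefers to avoid the weighted AM--GM trick, the same bound follows by noting that $s\mapsto v\,e^{vs}+(1-v)e^{-(1-v)s}$ is strictly convex, so its minimum is attained at the unique critical point $s^{\ast}=2\log\frac{1-v}{v}$, where its value equals $\bigl(\frac{1-v}{v}\bigr)^{2v-1}$, and this is $\ge\frac{v}{1-v}=\bigl(\frac{1-v}{v}\bigr)^{-1}$ because $\frac{1-v}{v}\ge 1$ and $2v-1\ge -1$. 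Specialising to $v=\tfrac12$ recovers the Zou--Jiang inequality $G(a,b)\bigl(1+\tfrac18(\log a-\log b)^{2}\bigr)\le A(a,b)$, as it should.
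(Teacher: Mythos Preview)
Your proof is correct. The reductions (homogeneity, the $v\leftrightarrow 1-v$ symmetry, the trivial case $v=0$) are sound, the computation of $F,F',F''$ is accurate, and the key step---bounding $v\,e^{vs}+(1-v)e^{-(1-v)s}$ from below by $\frac{v}{1-v}$ via the weighted AM--GM with $X=\frac{v}{1-v}e^{vs}$, $Y=\frac{1-v}{v}e^{-(1-v)s}$ followed by $x^{t}\ge x$ for $0<x\le 1$, $0\le t\le 1$---goes through cleanly. Your alternative (minimising the convex function directly and evaluating at $s^{\ast}=2\log\frac{1-v}{v}$) is also correct.

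Your route is genuinely different from the paper's. The paper works in the variable $t=a/b$ with the undivided function $f_v(t)=(1-v)t+v-t^{1-v}-\frac{v^2}{2}(\log t)^2 t^{1-v}$ and differentiates three times, ultimately reducing to the elementary bound $x-\log x\ge 1$ applied to $x=t^{v}$. You instead pass to $s=\log t$ and divide through by $G_v$, which makes the target function a sum of two exponentials minus a quadratic; this lets you stop at the second derivative and finish with the Young (weighted AM--GM) inequality itself. The trade-off: your argument is shorter and more conceptual---it uses Young's inequality to prove its own refinement---while the paper's argument is a self-contained calculus grind that does not invoke any external inequality beyond $x-\log x\ge 1$.
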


\begin{proof}
The first inequality of \eqref{refine_Young_ineq01} is trivial.
To prove the second inequality of \eqref{refine_Young_ineq01}, we assume $0\le v \le \dfrac{1}{2}$ and set
$$
f_v(t):=(1-v)t+v-t^{1-v}-\frac{v^2}{2}\left(\log t\right)^2t^{1-v},\quad t>0.
$$
Then we have 
$$\frac{df_v(t)}{dt}=\frac{g_v(t)}{2t^v},\quad g_v(t):=2(1-v)(t^v-1)-2v^2\log t-v^2(1-v)\left(\log t\right)^2.$$
We also have 
$$
\frac{dg_v(t)}{dt}=\frac{2v}{t}h_v(t),\quad h_v(t):=(1-v)(t^v-\log t^v)-v.
$$
Since $\dfrac{dh_v(t)}{dt}=\dfrac{v(1-v)(t^v-1)}{t}$, we have $h_v(t)\ge h_v(1)=1-2v\ge 0$ which implies $\dfrac{dg_v(t)}{dt}\ge 0$
Thus we have $g_v(t)\ge g_v(1)=0$ for $t \ge 1$, and $g_v(t)\le g_v(1)=0$ for $0<t\le 1$, so that we have  $\dfrac{df_v(t)}{dt}\ge 0$ for $t \ge 1$, and $\dfrac{df_v(t)}{dt}\le  0$ for $0<t\le 1$. Therefore we have $f_v(t)\ge f_v(1)=0$. For the case $1/2 \le v \le 1$ can be proven similarly. Finally, putting $t:=a/b$ and then multiplying $b>0$ to both sides, we obtain the desired result.
\end{proof}

\begin{remark}
It is remarkable that Theorem \ref{theorem_refine_Young01} recovers the following inequality \cite{ZJ2015}:
\begin{equation}\label{refine_Young_ineq_remark01}
\left\{1+\frac{1}{8}\left(\log a-\log b\right)^2\right\}G(a,b)\le A(a,b)
\end{equation}
when $v=1/2$. In addition, the following reverse of the second inequality in \eqref{refine_Young_ineq01}:
$$
A_v(a,b)\le \left\{1+\frac{\lambda^2}{2}\left(\log a-\log b\right)^2\right\}G_v(a,b),\quad \lambda:=\max\{1-v,v\}
$$
does not hold in general, because of \eqref{refine_Young_ineq_remark01}.

The authors have been unaware the paper \cite{K2017} until the review. Although our generalization given in Theorem \ref{theorem_refine_Young01} for \eqref{refine_Young_ineq_remark01} is natural and simple, we have to point out that the inequality given in \cite[(2.1)]{K2017} is better than our one.
\end{remark}

Considering $r$-logarithmic function which is defined by $\ln_r x:=\dfrac{x^r-1}{r}$ for $x>0$ and $r\neq 0$, we find the following corollary of Theorem \ref{theorem_refine_Young01}. We note that $\lim\limits_{r\to 0}\ln_r x=\log x$.

\begin{corollary}\label{refine_Young_cor}
Let $0\le v \le 1$, $r\neq 0$ and $a,b>0$.
If we have the conditions (i)$r>0$ and $0<a\le b$, or (ii)$r<0$ and $a \ge b >0$, then we have the following inequalities.
\begin{equation}\label{refine_Young_cor_ineq01}
G_v(a,b)\le \left\{1+\frac{\mu^2}{2}\left(\ln_r\frac{a}{b}\right)^2\right\}G_v(a,b) \le A_v(a,b),
\end{equation}
where $\mu:=\min\{1-v,v\}$.
\end{corollary}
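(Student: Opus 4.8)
The first inequality in \eqref{refine_Young_cor_ineq01} is trivial, since the bracketed factor is at least $1$. For the second inequality, the plan is to reduce everything to Theorem \ref{theorem_refine_Young01} by comparing $\ln_r(a/b)$ with $\log(a/b)$. Since $G_v(a,b)>0$ and $\mu^2\ge 0$, the quantity $\bigl\{1+\tfrac{\mu^2}{2}c\bigr\}G_v(a,b)$ is nondecreasing in $c\ge 0$, so it suffices to show that under hypothesis (i) or (ii) one has
\begin{equation*}
\left(\ln_r\frac{a}{b}\right)^2\le\left(\log\frac{a}{b}\right)^2 .
\end{equation*}
Indeed, the chain $\bigl\{1+\tfrac{\mu^2}{2}(\ln_r\tfrac{a}{b})^2\bigr\}G_v(a,b)\le\bigl\{1+\tfrac{\mu^2}{2}(\log\tfrac{a}{b})^2\bigr\}G_v(a,b)\le A_v(a,b)$ then closes, the last step being precisely Theorem \ref{theorem_refine_Young01}.

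To prove that squared inequality I would put $x:=a/b>0$ and establish $|\ln_r x|\le|\log x|$ by an elementary one-variable argument. Under (i), $0<x\le 1$ and $r>0$, hence $x^r\le 1$ and both $\ln_r x=\tfrac{x^r-1}{r}$ and $\log x$ are nonpositive; I would set $f(x):=\ln_r x-\log x$, note $f(1)=0$ and $f'(x)=x^{r-1}-x^{-1}=\tfrac{x^r-1}{x}\le 0$ on $(0,1]$, and conclude $f(x)\ge 0$, i.e. $\log x\le\ln_r x\le 0$. Under (ii), $x\ge 1$ and $r<0$, hence again $x^r\le 1$ and both $\ln_r x$ and $\log x$ are nonnegative; with $g(x):=\log x-\ln_r x$ one gets $g(1)=0$ and $g'(x)=x^{-1}-x^{r-1}=\tfrac{1-x^r}{x}\ge 0$ on $[1,\infty)$, hence $0\le\ln_r x\le\log x$. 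In either case the two quantities share the same sign, so squaring the resulting inequality gives $(\ln_r x)^2\le(\log x)^2$, as needed; finally substitute $x=a/b$.

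The argument poses no real obstacle; the only delicate point is keeping track of the signs of $\ln_r x$ and $\log x$ so that passing from the absolute-value inequality to the squared one is legitimate, and that is exactly what the two case hypotheses (i) and (ii) are there to guarantee. As a consistency check, letting $r\to 0$ yields $\ln_r x\to\log x$ and recovers Theorem \ref{theorem_refine_Young01}.
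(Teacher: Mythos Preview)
Your argument is correct. The key step is the inequality $(\ln_r x)^2\le(\log x)^2$ under the stated conditions, after which Theorem~\ref{theorem_refine_Young01} finishes the job; your sign analysis and one-variable differentiation in $x$ establish this cleanly.

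The paper reaches the same conclusion by a different route: rather than comparing $\ln_r x$ and $\log x$ directly, it sets $f_{v,r}(t):=(1-v)t+v-t^{1-v}-\tfrac{\mu^2}{2}(\ln_r t)^2t^{1-v}$ and differentiates with respect to the parameter $r$, obtaining
\[
\frac{\partial f_{v,r}(t)}{\partial r}=\frac{\mu^2 t^{1-v}(1-t^r)}{r^3}\bigl(1-t^r+t^r\log t^r\bigr),
\]
and then uses $\log y\le y-1$ (with $y=t^{-r}$) to check the sign of the last factor. This shows $f_{v,r}$ is monotone in $r$ on the relevant half-line, so $f_{v,r}\ge f_{v,0}\ge 0$, the last inequality being Theorem~\ref{theorem_refine_Young01}. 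In effect the paper is proving that $r\mapsto(\ln_r t)^2$ is monotone, whereas you fix $r$ and compare $\ln_r$ with $\log$ via differentiation in $x$. Your approach is more elementary and transparent---it isolates exactly the scalar inequality $|\ln_r x|\le|\log x|$ that drives the result---while the paper's parametric differentiation packages the whole statement into a single derivative computation. Both are short; yours makes the reduction to Theorem~\ref{theorem_refine_Young01} more explicit.
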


\begin{proof}
The first inequality of \eqref{refine_Young_cor_ineq01} is trivial.
We consider the function
$$
f_{v,r}(t):=(1-v)t+v-t^{1-v}-\frac{\mu^2}{2}(\ln_r t)^2t^{1-v},\quad (t>0,\quad r\neq 0,\quad 0\le v \le 1).
$$
Then we have
$$
\frac{df_{v,r}(t)}{dr}=\frac{\mu^2t^{1-v}(1-t^r)}{r^3}\left(1-t^r+t^r\log t^r \right).
$$
Putting $x:=1/t^r$
in the fundamental inequality $\log x \le x-1$ for $x>0$, we have $1-t^r+t^r\log t^r \ge 0$ for $t>0$ and $r\in\mathbb{R}$. Thus we have $\dfrac{df_{v,r}(t)}{dr} \ge 0$ for $r>0$ and $0<t\le 1$, which implies $f_{v,r}(t) \ge f_{v,0}(t) \ge 0$. The last inequality is thanks to  the second inequality of \eqref{refine_Young_ineq01} with the fact $\lim\limits_{r\to 0}\ln_r x=\log x$.
We also have $\dfrac{df_{v,r}(t)}{dr} \le 0$ for $r<0$ and $t\ge 1$, which implies $f_{v,r}(t) \ge f_{v,0}(t) \ge 0$, similarly. Therefore, if we have the conditions (i)$r>0$ and $0<t\le 1$, or (ii)$r<0$ and $t\ge 1$, then we have the following inequalities:
$$
(1-v)t+v-t^{1-v}-\frac{\mu^2}{2}(\ln_r t)^2t^{1-v}\ge 0.
$$
Putting $t:=a/b$ in the above and multiplying $b>0$ to both sides, we get the second inequality of \eqref{refine_Young_cor_ineq01}.
\end{proof}

It is well known that
\begin{equation}\label{Heinz_mean_ineq_orig}
G(a,b)\le Hz_v(a,b)\le A(a,b),
\end{equation}
where $Hz_v(a,b):=A(G_v(a,b),G_{1-v}(a,b))=\dfrac{a^{1-b}b^v+a^{v}b^{1-v}}{2}$ is the Heinz mean. Then we have the following inequalities from Theorem \ref{theorem_refine_Young01}.

\begin{corollary}\label{corollary_Heinz}
For $a,b>0$ and $0\le v \le 1$, we have
$$
G(a,b)\le Hz_v(a,b)\le \left\{1+\frac{\mu^2}{2}\left(\log a-\log b\right)^2\right\}Hz_v(a,b) \le A(a,b).
$$
\end{corollary}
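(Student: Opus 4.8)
The plan is to derive the asserted chain by combining the known Heinz-mean bounds \eqref{Heinz_mean_ineq_orig} with the refined Young inequality of Theorem \ref{theorem_refine_Young01}, exploiting the symmetry $Hz_v = Hz_{1-v}$ and the definition $Hz_v(a,b)=A\bigl(G_v(a,b),G_{1-v}(a,b)\bigr)$. The first inequality $G(a,b)\le Hz_v(a,b)$ is just the left-hand side of \eqref{Heinz_mean_ineq_orig}. The second inequality is trivial, since $\mu^2(\log a-\log b)^2/2\ge 0$ forces $Hz_v(a,b)\le\{1+\tfrac{\mu^2}{2}(\log a-\log b)^2\}Hz_v(a,b)$. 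So the only substantive step is the third inequality.

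For the third inequality I would apply Theorem \ref{theorem_refine_Young01} twice. Note first that $\mu=\min\{1-v,v\}=\min\{1-(1-v),1-v\}$ is unchanged under $v\mapsto 1-v$. By the second inequality in \eqref{refine_Young_ineq01} applied with weight $v$ we have
\begin{equation*}
\Bigl\{1+\tfrac{\mu^2}{2}(\log a-\log b)^2\Bigr\}G_v(a,b)\le A_v(a,b)=(1-v)a+vb,
\end{equation*}
and applying it with weight $1-v$ (so that $G_{1-v}(a,b)=a^vb^{1-v}$ and $A_{1-v}(a,b)=va+(1-v)b$) we get
\begin{equation*}
\Bigl\{1+\tfrac{\mu^2}{2}(\log a-\log b)^2\Bigr\}G_{1-v}(a,b)\le A_{1-v}(a,b)=va+(1-v)b.
\end{equation*}
Averaging these two inequalities, the left-hand side becomes $\{1+\tfrac{\mu^2}{2}(\log a-\log b)^2\}\cdot\tfrac12\bigl(G_v(a,b)+G_{1-v}(a,b)\bigr)=\{1+\tfrac{\mu^2}{2}(\log a-\log b)^2\}Hz_v(a,b)$, while the right-hand side becomes $\tfrac12\bigl((1-v)a+vb+va+(1-v)b\bigr)=\tfrac12(a+b)=A(a,b)$. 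This yields exactly the third inequality.

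I do not expect a genuine obstacle here: the corollary is a direct consequence of Theorem \ref{theorem_refine_Young01} together with the observation that the Heinz mean is the arithmetic average of $G_v$ and $G_{1-v}$ and that the coefficient $\mu$ is symmetric in $v\leftrightarrow 1-v$. The only point deserving a line of care is recording that the right-hand sides $A_v(a,b)$ and $A_{1-v}(a,b)$ average to $A(a,b)$, which is immediate. One could alternatively phrase the argument purely at the scalar level by setting $t=a/b$ and dividing through by $b$, but the averaging presentation above is cleanest.
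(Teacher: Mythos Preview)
Your argument is correct and follows essentially the same route as the paper: apply \eqref{refine_Young_ineq01} with weight $v$ and with weight $1-v$ (using that $\mu$ is invariant under $v\mapsto 1-v$), average, and combine with the known bound $G(a,b)\le Hz_v(a,b)$ from \eqref{Heinz_mean_ineq_orig}. The paper states this in one line; your version simply spells out the averaging and the trivial middle inequality.
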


\begin{proof}
Replacing $v$ by $1-v$ in \eqref{refine_Young_ineq01} and adding it to \eqref{refine_Young_ineq01} and then dividing 2, we get the result.
\end{proof}

The inequality
\begin{equation}\label{conc01}
L(a,b)\le \frac{1}{2}A_v(a,b)+\frac{1}{2}G_{v}(a,b),\quad a,b>0,\,\,\,\,0\le v \le 1
\end{equation}
does not hold in general, since we have counter-examples such as
\begin{equation}\label{conc02}
\frac{1}{2}A_{1/4}\left(1/2,1\right)+\frac{1}{2}G_{1/4}\left(1/2,1\right)-L\left(1/2,1\right)\simeq  -0.223091
\end{equation}
and
\begin{equation}\label{conc03}
\frac{1}{2}A_{3/4}\left(2,1\right)+\frac{1}{2}G_{3/4}\left(2,1\right)-L\left(2,1\right)\simeq  -0.446183.
\end{equation}
However, we have the following inequality for the weighted mean.
\begin{theorem}\label{theorem_heinz_ineq01}
For $a,b>0$ and $0\le v \le 1$, we have
\begin{equation}\label{heinz_ineq02}
L(a,b)\le \frac{1}{2}A_v(a,b)+\frac{1}{2}G_{1-v}(a,b).
\end{equation}
\end{theorem}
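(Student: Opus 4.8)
The plan is to combine the integral representation $L(a,b)=\int_0^1 a^tb^{1-t}\,dt$ from \eqref{lm_def} with a simple convexity estimate. Put $\psi(t):=a^tb^{1-t}$ for $t\in[0,1]$; since $\psi(t)=b\,e^{kt}$ with $k:=\log a-\log b$, we have $\psi''(t)=bk^2e^{kt}\ge 0$, so $\psi$ is convex on $[0,1]$, with $\psi(0)=b$, $\psi(1)=a$ and $\psi(v)=a^vb^{1-v}=G_{1-v}(a,b)$. (The case $a=b$ is trivial, so we may assume $k\neq 0$.)

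The key idea is to split the integral at the weight $t=v$, not at the midpoint:
\[
L(a,b)=\int_0^v\psi(t)\,dt+\int_v^1\psi(t)\,dt .
\]
On each of $[0,v]$ and $[v,1]$ the convex function $\psi$ lies below the chord joining its two endpoint values, and the integral of that affine chord is computed exactly by the trapezoidal rule; hence
\[
\int_0^v\psi(t)\,dt\le\frac{v}{2}\bigl(\psi(0)+\psi(v)\bigr)
\quad\text{and}\quad
\int_v^1\psi(t)\,dt\le\frac{1-v}{2}\bigl(\psi(v)+\psi(1)\bigr).
\]
Adding these and collecting the $\psi(v)$-terms gives
\[
L(a,b)\le\frac{v}{2}\,\psi(0)+\frac{1}{2}\,\psi(v)+\frac{1-v}{2}\,\psi(1)
=\frac{1}{2}\bigl((1-v)a+vb\bigr)+\frac{1}{2}\,a^vb^{1-v},
\]
and the right-hand side is exactly $\frac{1}{2}A_v(a,b)+\frac{1}{2}G_{1-v}(a,b)$, which is \eqref{heinz_ineq02}. (When $v=0$ or $v=1$ one of the two integrals is empty and the estimate becomes $L(a,b)\le A(a,b)$, which is true as well.)

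I do not expect a genuine obstacle: the only points needing attention are that the split must be made at $t=v$ — this is what forces the middle term to be $\psi(v)=G_{1-v}(a,b)$ — and the elementary bound $\int_p^q\psi\le\frac{q-p}{2}(\psi(p)+\psi(q))$ valid for convex $\psi$, which follows from $\psi\le(\text{chord})$ by integration. As an alternative one could normalise $b=1$, $a=e^x$, expand both sides as power series in $x$, and reduce the claim to the coefficient inequality $(1-v)+v^n\ge\frac{2}{n+1}$ for every $n\ge1$ (equivalently $n^{1/(n-1)}\ge 1+\frac{1}{n}$ after minimising over $v\in[0,1]$); but that route additionally requires the symmetry of the inequality under $(a,b,v)\mapsto(b,a,1-v)$ to cover the case $x<0$, so the convexity argument above is the shorter one.
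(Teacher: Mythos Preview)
Your proof is correct and considerably shorter than the paper's. The paper normalises to $t=a/b$, sets $f_v(t):=t^v+(1-v)t+v-\dfrac{2(t-1)}{\log t}$, and argues by calculus in two stages: first it minimises $f_v(t)$ over $v$ (using $\partial^2 f_v/\partial v^2\ge 0$) to reduce to a single-variable inequality $f_{v_{\min}}(t)\ge 0$, and then proves that by a further derivative computation resting on $L(t,1)\ge\sqrt t$. Your argument, by contrast, exploits directly the convexity of $\psi(t)=a^tb^{1-t}$ and the right-hand Hermite--Hadamard (trapezoidal) bound on each of $[0,v]$ and $[v,1]$; the choice of the split point is what makes the cross term collapse to $\psi(v)=G_{1-v}(a,b)$. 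This is the same mechanism the paper alludes to elsewhere (in connection with \eqref{sec01_ineq01} for $L_v$), but the paper did not notice it applies to the present statement; what it buys you is a two-line proof with no derivative chasing, while the paper's route yields as a by-product the explicit minimiser $v_{\min}=\log_t\!\bigl(\tfrac{t-1}{\log t}\bigr)$ and the auxiliary bound $\tfrac{t-1}{\log t}\le e^{-1}t^{t/(t-1)}$.
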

\begin{proof}
To prove \eqref{heinz_ineq02}, it is sufficient to prove $f_v(t)\geq 0$ for $t>0$ and $0\le v \le 1$, where
$$
f_v(t):=t^v+(1-v)t+v-\frac{2(t-1)}{\log t},\quad t>0.
$$
Then we have $\dfrac{f_v(t)}{dv}=1-t+t^v\log t$ and $\dfrac{d^2f_v(t)}{dv^2}=t^v\left(\log t\right)^2\ge 0$. Then $f_v(t)$ takes a minimum value at $v=v_{min}$, that is, 
$$\dfrac{f_{v_{min}}(t)}{dv}= 0 \Leftrightarrow t^{v_{min}}=\frac{t-1}{\log t}\Leftrightarrow v_{min}=\log_t\left(\dfrac{t-1}{\log t}\right).$$ 
Therefore we have 
$$
f_v(t)\ge f_{v_{min}}(t)=t-\frac{(t-1)}{\log t}\log\left(\frac{t-1}{\log t}\right)-\frac{(t-1)}{\log t}.
$$
We here prove $f_{v_{min}}(t)\ge 0$ for $t > 0$. Then we set for $t>0$ the function $g(t)$ by
$$
g(t):=\frac{t\log t}{t-1}-\log \left(\frac{t-1}{\log t}\right)-1.
$$
Then we have
$$
g'(t)=\frac{1}{t\log t}-\frac{\log t}{(t-1)^2}=\frac{(t-1)^2-t\left(\log t\right)^2}{t(t-1)^2\log t}\ge 0,\quad t\ge 1
$$
since we have $\dfrac{t-1}{\log t}\ge \sqrt{t}$ for $t>0$. 
We also have $g'(t)\le 0$ for $0<t \le 1$.
Thus we have $g(t)\ge g(1)=0$ for $t>0$, since $\lim\limits_{t\to 1} \dfrac{t\log t}{t-1}=1$ and $\lim\limits_{t\to 1} \dfrac{t-1}{\log t}=1$. Multiplying $\dfrac{t-1}{\log t} >0,\,\,(t>0)$ to $g(t)$,  we thus have  $f_v(t)\ge f_{v_{min}}(t)\ge 0$ for $t>0$.
\end{proof}

\begin{remark}
\begin{itemize}
\item[(i)] Replacing $v$ by $1-v$ in \eqref{heinz_ineq02} with same procedure in the Corollary \ref{corollary_Heinz}, we get the following inequality. 
\begin{equation}\label{heinz_ineq01}
L(a,b)\le \frac{1}{2}A(a,b)+\frac{1}{2}Hz_v(a,b).
\end{equation}
This inequality is also proven by the use of \eqref{sec01_ineq01} with $v=1/2$ and \eqref{Heinz_mean_ineq_orig} as
$$
L(a,b)\le \frac{1}{2}A(a,b)+\frac{1}{2}G(a,b)\le \frac{1}{2}A(a,b)+\frac{1}{2}Hz_v(a,b).
$$
\item[(ii)] 
From $g(t)\ge 0$ in the proof of Theorem \ref{theorem_heinz_ineq01} and \cite[Lemma 3]{F2012}, we have the bounds of $t^{\frac{t}{t-1}}$ in the following
$$
\frac{t-1}{\log t}\le \frac{t^{\frac{t}{t-1}}}{e}\le \frac{t^2+1}{t+1},\quad t>0.
$$
\end{itemize}
\end{remark}

We study the properties on the function which is a representing function of the weighted logarithmic mean.
\begin{equation}\label{Property_ineq01}
L_v(t):=\frac{1}{\log t}\left(\frac{1-v}{v}\left(t-t^{1-v}\right)+\frac{v}{1-v}\left(t^{1-v}-1\right)\right).
\end{equation}
We easily see $b L_v\left(a/b\right)=L_v(a,b)$, $L_{1/2}(t)=\dfrac{t-1}{\log t}$, $\lim\limits_{v\to 0}L_v(t)=t$ and $\lim\limits_{v\to 1}L_v(t)=1$. We also see $\lim\limits_{t\to 1}L_v(t)=1$ and $\lim\limits_{t\to 0}L_v(t)=0$. We have the following property.

\begin{proposition} \label{prop_monotone}
The function $L_v(t)$ given in \eqref{Property_ineq01} is increasing with respect to $v$ when $0<t\le 1$ and decreasing  with respect to $v$ when $t\ge 1$.
\end{proposition}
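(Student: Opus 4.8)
The plan is to recognize $L_v(t)$ as an average of powers of $t$ against a probability density on $[0,1]$ and then to read off the monotonicity in $v$ from a first-order stochastic-dominance property of that density.

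First I would record the representation. Since $\int_d^c t^x\,dx=\frac{t^c-t^d}{\log t}$, formula \eqref{Property_ineq01} becomes
\[
L_v(t)=\frac{v}{1-v}\int_0^{1-v}t^x\,dx+\frac{1-v}{v}\int_{1-v}^{1}t^x\,dx=\int_0^1 t^x\,\phi_v(x)\,dx,
\]
where $\phi_v(x):=\frac{v}{1-v}$ for $0\le x<1-v$ and $\phi_v(x):=\frac{1-v}{v}$ for $1-v<x\le 1$. One checks $\int_0^1\phi_v(x)\,dx=v+(1-v)=1$, so $\phi_v$ is a probability density, and its distribution function $F_v(s):=\int_0^s\phi_v(x)\,dx$ equals $\frac{v}{1-v}s$ for $s\le 1-v$ and $1-\frac{(1-v)(1-s)}{v}$ for $s\ge 1-v$, the two formulas agreeing (value $v$) at the junction $s=1-v$. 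Intuitively, raising $v$ shifts the mass of $\phi_v$ toward $x=0$.

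The computational heart is the claim that, for each fixed $s\in[0,1]$, the map $v\mapsto F_v(s)$ is nondecreasing on $(0,1)$. Fixing $s$ and splitting at $v=1-s$: on $\{v\le 1-s\}$ we have $F_v(s)=s\bigl(\frac{1}{1-v}-1\bigr)$, whence $\partial_v F_v(s)=\frac{s}{(1-v)^2}\ge 0$; on $\{v\ge 1-s\}$ we have $F_v(s)=1-(1-s)\bigl(\frac1v-1\bigr)$, whence $\partial_v F_v(s)=\frac{1-s}{v^2}\ge 0$; and the two pieces match continuously at $v=1-s$. The only slightly delicate point is that the kink of $\phi_v$ sits exactly at $x=1-v$, i.e., where $\min\{v,1-v\}$ switches, so one must keep the case split honest; each piece, though, is completely elementary, and this is where I expect the (minor) friction to be.

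Finally, for $v_1<v_2$ an integration by parts gives, using $F_{v_i}(0)=0$ and $F_{v_i}(1)=1$,
\[
L_{v_2}(t)-L_{v_1}(t)=\int_0^1 t^x\bigl(\phi_{v_2}(x)-\phi_{v_1}(x)\bigr)\,dx=-\log t\int_0^1\bigl(F_{v_2}(x)-F_{v_1}(x)\bigr)t^x\,dx.
\]
By the previous step $F_{v_2}-F_{v_1}\ge 0$ and $t^x>0$, so the last integral is nonnegative (and strictly positive unless $t=1$); hence $L_{v_2}(t)-L_{v_1}(t)$ has the sign of $-\log t$, which is exactly the assertion: increasing in $v$ for $0<t\le 1$ and decreasing in $v$ for $t\ge 1$. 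As an alternative that stays closer to the computations elsewhere in the paper, one may instead differentiate $L_v(t)$ in $v$ directly; after multiplying by $\log t$ and by $v^2(1-v)^2$ and substituting $t=e^{-w}$, $p=vw$, $q=(1-v)w$, the sign of $\log t\cdot\partial_v L_v(t)$ reduces to that of $-p^2(e^{q}-1-q)-q^2(e^{-p}-1+p)\le 0$, which follows from $e^x\ge 1+x$ applied twice.
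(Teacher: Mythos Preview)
Your proof is correct and takes a genuinely different route from the paper. The paper differentiates $L_v(t)$ in $v$ directly, writes
\[
\frac{dL_v(t)}{dv}=\frac{f_v(t)}{v^2(1-v)^2 t^{v-1}\log t},
\]
and then shows $f_v(t)\le 0$ via a two-step chain of derivatives ($f_v'\propto g_v$, $g_v'\le 0$, $g_v(1)=0$, $f_v(1)=0$), so the sign of $dL_v/dv$ is carried by the factor $1/\log t$. Your argument instead recognizes $L_v(t)=\int_0^1 t^x\phi_v(x)\,dx$ with a step density $\phi_v$, proves that the associated distribution functions $F_v$ increase in $v$ (a first-order stochastic-dominance statement), and reads off the sign of $L_{v_2}(t)-L_{v_1}(t)$ from an integration by parts. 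What this buys is a transparent explanation of \emph{why} the monotonicity holds: increasing $v$ shifts mass toward small $x$, and $x\mapsto t^x$ is monotone with slope of sign $\log t$. It also immediately generalizes (the same argument gives monotonicity of $\int_0^1 h(x)\phi_v(x)\,dx$ for any monotone $h$). The paper's computation, by contrast, is self-contained and requires no integral representation, which fits the elementary-calculus style used throughout. Your final alternative, after the substitution $t=e^{-w}$, $p=vw$, $q=(1-v)w$, is morally the paper's approach but with a cleaner endpoint than the paper's three-derivative cascade.
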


\begin{proof}
We calculate
$$
\frac{dL_v(t)}{dv}=\frac{f_v(t)}{v^2(1-v)^2t^{v-1}\log t},\,\,\,\,f_v(t):=-(1-v)^2t^v-v^2t^{v-1}+(2v^2-2v+1)+v(2v^2-3v+1)\log t.
$$
We also calculate
$$
\frac{df_v(t)}{dt}=\frac{v(1-v)}{t}g_v(t),\,\,\,\,g_v(t):=-(1-v)t^v+vt^{v-1}+1-2v.
$$
Since we have
$$
\frac{dg_v(t)}{dt}=-v(1-v)(t+1)t^{v-2} \le 0,
$$
we have $g_v(t)\ge g_v(1)=0$ for $0<t \le 1$ and $g_v(t)\le g_v(1)=0$ for $t\ge 1$. That is, we have $\dfrac{df_v(t)}{dt}\ge 0$ for  $0<t \le 1$ and $\dfrac{df_v(t)}{dt}\le  0$ for $t\ge 1$.  Thus we have $f_v(t)\le f_v(1)=0$.
Therefore we have $\dfrac{dL_v(t)}{dv} \ge 0$ for $0<t \le 1$ and   $\dfrac{dL_v(t)}{dv} \le 0$ for $t \ge 1$.
\end{proof}

Since $\dfrac{d}{dv}\left((1-v)t+v\right)=1-t$, $\dfrac{d}{dv}t^{1-v}=-t^{1-v}\log t$ and $\dfrac{d}{dv}\left(\dfrac{t}{(1-v)+v t}\right)=\dfrac{t(1-t)}{\left\{(1-v)+v t\right\}^2}$, we easily see that the representing functions for the weighted arithmetic mean, the weighted geometric mean and the weighted harmonic mean have similar properties. Proposition \ref{prop_monotone} will be applied to the proof of Proposition \ref{prop3.1}.

We further study the inequalities among means. In the sequel, we consider the bounds of the nested means for the weighted means. From the simple calculations and numerical computations, we see
$$
A(A_v(a,b),A_{1-v}(a,b))=A(a,b),\quad G(G_v(a,b),G_{1-v}(a,b))=G(a,b), 
$$
and
$$
H(H_v(a,b),H_{1-v}(a,b))=H(a,b),\quad L(L_v(a,b),L_{1-v}(a,b))\ne L(a,b).
$$
Because we have  
\begin{equation}\label{conjecture01}
L(L_{1/4}(10,1),L_{3/4}(10,1))- L(10,1)\simeq 0.0173327
\end{equation}
as an example.

As given in \eqref{order_four_means}, the distance between the geometric mean and the arithmetic mean is not so tight. Therefore the proof for the following relations are not so difficult.
\begin{proposition}\label{Relation_AG_prop01}
For $0\le v \le 1$ and $a,b>0$, we have
\begin{equation}\label{Relation_AG_ineq01}
G(a,b)\le G\left(A_v(a,b),A_{1-v}(a,b)\right) \le A(a,b)
\end{equation}
and
\begin{equation}\label{Relation_AG_ineq02}
G(a,b)\le A\left(G_v(a,b),G_{1-v}(a,b)\right)\le A(a,b).
\end{equation}
\end{proposition}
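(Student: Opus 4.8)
The plan is to reduce each of the four inequalities to an elementary scalar statement; none of them requires calculus, only an algebraic identity for the product of two ``conjugate'' weighted arithmetic means, the classical AM--GM inequality, and the Heinz mean bounds \eqref{Heinz_mean_ineq_orig} already recorded above.

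For \eqref{Relation_AG_ineq01} I would first note that $A_v(a,b)$ and $A_{1-v}(a,b)$ are conjugate in the sense that
\[
A_v(a,b)+A_{1-v}(a,b)=\big((1-v)a+vb\big)+\big(va+(1-v)b\big)=a+b .
\]
Applying AM--GM to these two positive numbers gives
\[
G\big(A_v(a,b),A_{1-v}(a,b)\big)\le \frac{A_v(a,b)+A_{1-v}(a,b)}{2}=\frac{a+b}{2}=A(a,b),
\]
which is the right-hand inequality. For the left-hand one I would expand the product and record the identity
\[
A_v(a,b)\,A_{1-v}(a,b)=ab+v(1-v)(a-b)^2 ;
\]
since $v(1-v)\ge 0$ for $0\le v\le 1$, the right side is at least $ab$, so $G\big(A_v(a,b),A_{1-v}(a,b)\big)=\sqrt{ab+v(1-v)(a-b)^2}\ge\sqrt{ab}=G(a,b)$.

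For \eqref{Relation_AG_ineq02} the point is simply that
\[
A\big(G_v(a,b),G_{1-v}(a,b)\big)=\frac{a^{1-v}b^v+a^vb^{1-v}}{2}=Hz_v(a,b),
\]
so \eqref{Relation_AG_ineq02} is nothing but the classical Heinz mean inequality \eqref{Heinz_mean_ineq_orig}. Thus there is essentially no obstacle here; the only step worth isolating is the identity $A_v(a,b)A_{1-v}(a,b)=ab+v(1-v)(a-b)^2$, which both makes the lower bound transparent and quantifies the gap between $G(a,b)$ and $G(A_v(a,b),A_{1-v}(a,b))$. As the authors remark, the geometric--arithmetic distance displayed in \eqref{order_four_means} is loose enough that these crude estimates already suffice, so the real content of the proposition is the \emph{recognition} that $A\circ(G_v,G_{1-v})$ is the Heinz mean and that $A_v\cdot A_{1-v}$ has the above clean form.
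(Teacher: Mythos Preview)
Your proof is correct and largely mirrors the paper's: both arguments obtain the upper bound in \eqref{Relation_AG_ineq01} from $A_v(a,b)+A_{1-v}(a,b)=a+b$ together with AM--GM, and both identify \eqref{Relation_AG_ineq02} with the Heinz inequality \eqref{Heinz_mean_ineq_orig}. The one genuine difference is the lower bound in \eqref{Relation_AG_ineq01}: the paper argues via monotonicity, namely $G(A_v,A_{1-v})\ge G(G_v,G_{1-v})=G(a,b)$ using $A_w\ge G_w$ for $w=v,1-v$, whereas you use the explicit identity $A_v(a,b)A_{1-v}(a,b)=ab+v(1-v)(a-b)^2$. Your route is slightly more informative since it quantifies the gap, while the paper's route emphasises the mean--theoretic structure; either way the result is immediate.
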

\begin{proof}
The inequalities \eqref{Relation_AG_ineq01} can be proven by
$$
A(a,b)=A(A_v(a,b),A_{1-v}(a,b))\ge G(A_v(a,b),A_{1-v}(a,b))\ge G(G_v(a,b),G_{1-v}(a,b))=G(a,b).
$$
The inequalities \eqref{Relation_AG_ineq01} are just same to the inequalities given in \eqref{Heinz_mean_ineq_orig}.
\end{proof}

From Proposition \ref{Relation_AG_prop01}, we have the following.
\begin{proposition}\label{Relation_HG_prop01}
For $0\le v \le 1$ and $a,b>0$, we have
\begin{equation}\label{Relation_HG_ineq01}
H(a,b)\le G\left(H_v(a,b),H_{1-v}(a,b)\right) \le G(a,b)
\end{equation}
and
\begin{equation}\label{Relation_HG_ineq02}
H(a,b)\le H\left(G_v(a,b),G_{1-v}(a,b)\right)\le G(a,b).
\end{equation}
\end{proposition}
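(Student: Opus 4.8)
The plan is to deduce Proposition~\ref{Relation_HG_prop01} from Proposition~\ref{Relation_AG_prop01} by the harmonic--arithmetic duality, namely by replacing $a,b$ with $1/a,1/b$ and then taking reciprocals. The preliminary step is to record the elementary identities, all immediate from the definitions of the weighted and unweighted means: for $x,y>0$ and $0\le v\le 1$ one has $H_v(a,b)=1/A_v(1/a,1/b)$, $G_v(a,b)=1/G_v(1/a,1/b)$, $G(x,y)=1/G(1/x,1/y)$, $A(x,y)=1/H(1/x,1/y)$, and $H(x,y)=1/A(1/x,1/y)$. In words, the geometric mean (weighted or not) is self-dual under inversion, while arithmetic and harmonic means swap; and the weights $v,1-v$ are untouched by the substitution since it only acts on the arguments.

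For \eqref{Relation_HG_ineq01}, I would apply \eqref{Relation_AG_ineq01} with $1/a,1/b$ in place of $a,b$, obtaining $G(1/a,1/b)\le G\bigl(A_v(1/a,1/b),A_{1-v}(1/a,1/b)\bigr)\le A(1/a,1/b)$. Using the identities above, the left-hand term equals $1/G(a,b)$, the right-hand term equals $1/H(a,b)$, and the middle term equals $G\bigl(1/H_v(a,b),1/H_{1-v}(a,b)\bigr)=1/G\bigl(H_v(a,b),H_{1-v}(a,b)\bigr)$. Taking reciprocals, which reverses the chain of inequalities, gives exactly \eqref{Relation_HG_ineq01}. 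For \eqref{Relation_HG_ineq02}, I would likewise apply \eqref{Relation_AG_ineq02} to $1/a,1/b$; the middle term $A\bigl(G_v(1/a,1/b),G_{1-v}(1/a,1/b)\bigr)$ becomes $A\bigl(1/G_v(a,b),1/G_{1-v}(a,b)\bigr)=1/H\bigl(G_v(a,b),G_{1-v}(a,b)\bigr)$, while the outer terms are again $1/G(a,b)$ and $1/H(a,b)$, and reciprocating yields \eqref{Relation_HG_ineq02}.

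There is essentially no analytic obstacle here; the only care needed is bookkeeping: that reciprocation reverses the inequality directions, and that one correctly tracks which mean is self-dual ($G$) and which pair is exchanged ($A\leftrightarrow H$) under $x\mapsto 1/x$. In particular I would double-check that the weight labels $v$ and $1-v$ in the nested arguments are preserved by the substitution, which they are. Hence Proposition~\ref{Relation_HG_prop01} follows with no further work beyond invoking Proposition~\ref{Relation_AG_prop01}.
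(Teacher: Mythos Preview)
Your proof is correct and follows exactly the same approach as the paper: apply Proposition~\ref{Relation_AG_prop01} to $1/a,1/b$, use the duality identities $A_v(1/a,1/b)^{-1}=H_v(a,b)$, $G_v(1/a,1/b)^{-1}=G_v(a,b)$, and take reciprocals to reverse the inequalities. The paper's own proof is precisely this argument, stated more tersely.
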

\begin{proof}
Replacing $a$ and $b$ by $1/a$ and $1/b$ in Proposition \ref{Relation_AG_prop01} respectively, and taking inverse we have
\eqref{Relation_HG_ineq01} and \eqref{Relation_HG_ineq02}, since
$A(1/a,1/b)^{-1}=H(a,b)$, $G(1/a,1/b)^{-1}=G(a,b)$, 
$$G\left(A_v(1/a,1/b),A_{1-v}(1/a,1/b)\right)^{-1}=G\left(H_v(a,b),H_{1-v}(a,b)\right)$$
and $$A\left(G_v(1/a,1/b),G_{1-v}(1/a,1/b)\right)^{-1}=H\left(G_v(a,b),G_{1-v}(a,b)\right).$$
\end{proof}

We have the following relation on the arithmetic mean and the logarithmic mean. Their proofs  are not so easy, since the distance between the logarithmic mean and the arithmetic mean is relatively tight.

\begin{theorem}\label{Relation_Lv_prop01}
For $0\le v \le 1$ and $a,b>0$, we have
\begin{equation}\label{Relation_Lv_ineq01}
L(a,b)\le A\left(L_v(a,b),L_{1-v}(a,b)\right) \le A(a,b)
\end{equation}
and
\begin{equation}\label{Relation_Lv_ineq02}
L(a,b)\le L\left(A_v(a,b),A_{1-v}(a,b)\right)\le A(a,b).
\end{equation}
\end{theorem}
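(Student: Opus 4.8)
The statement is a pair of double inequalities; my plan separates the (easy) upper bounds from the (substantial) lower bounds. In each chain the upper bound is immediate. For \eqref{Relation_Lv_ineq01}, the Young-type bound $L_w(a,b)\le A_w(a,b)$ from \eqref{order_four_means}, applied with $w=v$ and with $w=1-v$ and then averaged, gives $A\bigl(L_v(a,b),L_{1-v}(a,b)\bigr)\le A\bigl(A_v(a,b),A_{1-v}(a,b)\bigr)=A(a,b)$, the last equality being the identity noted before Proposition~\ref{Relation_AG_prop01}. For \eqref{Relation_Lv_ineq02}, the classical bound $L\le A$ (the case $v=\tfrac12$ of \eqref{order_four_means}) applied to the pair $\bigl(A_v(a,b),A_{1-v}(a,b)\bigr)$, together with the same identity, gives $L\bigl(A_v(a,b),A_{1-v}(a,b)\bigr)\le A\bigl(A_v(a,b),A_{1-v}(a,b)\bigr)=A(a,b)$. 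So the content of the theorem is entirely in the two lower bounds.

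For the lower bound in \eqref{Relation_Lv_ineq01} the plan is: first use $b\,L_w(a/b)=L_w(a,b)$ and $L(a,b)=L_{1/2}(a,b)$ to reduce the claim to the scalar inequality $L_{1/2}(t)\le\tfrac12\bigl(L_v(t)+L_{1-v}(t)\bigr)$ for $t>0$. The main device is an integral representation of $L_v$: with $\varphi(t):=a^tb^{1-t}$ one has $\varphi'(t)=\varphi(t)\log(a/b)$, so $\int_{1-v}^{1}\varphi=\frac{a-a^{1-v}b^{v}}{\log a-\log b}$ and $\int_{0}^{1-v}\varphi=\frac{a^{1-v}b^{v}-b}{\log a-\log b}$, whence
\[
L_v(a,b)=\frac{1-v}{v}\int_{1-v}^{1}\varphi(t)\,dt+\frac{v}{1-v}\int_{0}^{1-v}\varphi(t)\,dt .
\]
Adding the analogous identity for $L_{1-v}(a,b)$, assuming without loss of generality $v\le\tfrac12$ (the sum is symmetric under $v\leftrightarrow1-v$), and using $\int_0^1\varphi=L(a,b)$ from \eqref{lm_def}, a short rearrangement reduces the desired inequality to
\[
(1-2v)\int_0^1\varphi(t)\,dt\ \ge\ \int_{v}^{1-v}\varphi(t)\,dt ,
\]
i.e.\ the mean value of $\varphi$ over $[0,1]$ dominates its mean value over the centred subinterval $[v,1-v]$. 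Since $\varphi$ is convex, the function $\psi(x):=\varphi(\tfrac12+x)+\varphi(\tfrac12-x)$ is nondecreasing on $[0,\tfrac12]$, hence $r\mapsto\frac1r\int_0^{r}\psi(x)\,dx$ is nondecreasing, and this monotonicity is exactly the required comparison of averages. (Alternatively one can verify that $w\mapsto L_w(t)$ is convex on $[0,1]$, since the integrand of the representation above is convex in $w$ for each fixed value of the integration variable, and then invoke midpoint convexity.)

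For the lower bound in \eqref{Relation_Lv_ineq02} I would again homogenise, reducing to $L(t,1)\le L\bigl((1-v)t+v,\ vt+(1-v)\bigr)$ for $t>0$. The two arguments on the right have the same arithmetic mean $\tfrac{t+1}{2}$ as the pair $(t,1)$ but a geometric mean $\sqrt{t+v(1-v)(t-1)^2}\ge\sqrt t$; equivalently, $(t,1)$ majorises $\bigl((1-v)t+v,\ vt+(1-v)\bigr)$. It therefore suffices to know that $L$ is Schur-concave, i.e.\ that among pairs with a fixed arithmetic mean the logarithmic mean increases as the pair is drawn together. Writing such a pair as $\bigl(A(1+\tau),A(1-\tau)\bigr)$ with $\tau\in[0,1)$ gives $L=A\cdot\frac{2\tau}{\log\frac{1+\tau}{1-\tau}}$, so the claim becomes that $\tau\mapsto\frac{2\tau}{\log\frac{1+\tau}{1-\tau}}$ is nonincreasing on $[0,1)$; this in turn reduces to the elementary inequality $\log\frac{1+\tau}{1-\tau}\le\frac{2\tau}{1-\tau^2}$, which follows from $\frac{d}{d\tau}\Bigl(\frac{2\tau}{1-\tau^2}-\log\frac{1+\tau}{1-\tau}\Bigr)=\frac{4\tau^2}{(1-\tau^2)^2}\ge0$.

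The main obstacle is the lower bound of \eqref{Relation_Lv_ineq01}. A direct single-variable attack on $L_{1/2}(t)\le\tfrac12\bigl(L_v(t)+L_{1-v}(t)\bigr)$ leads to an unwieldy expression because $L_v$ has a complicated closed form, and, as the authors emphasise, $L$ lies so close to $A$ that coarse convexity or majorisation estimates (for instance bounding $\int_v^{1-v}\varphi$ by endpoint values) overshoot. Extracting the integral representation of $L_v$, which turns the problem into a transparent statement about averages of the convex function $a^tb^{1-t}$, is the crux; the remaining steps, and the whole of \eqref{Relation_Lv_ineq02}, are routine calculus.
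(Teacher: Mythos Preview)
Your proof is correct, and both lower bounds are handled by genuinely different (and cleaner) arguments than the paper's.

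For the lower bound in \eqref{Relation_Lv_ineq01}, the paper writes $f_v(t):=\tfrac12 L_v(t)+\tfrac12 L_{1-v}(t)-L_{1/2}(t)=\dfrac{g_v(t)}{2v(1-v)t^{v-1}\log t}$ with $g_v(t)=(1-2v)^2 t^{v-1}(t-1)+(1-2v)(t^{2v-1}-1)$ and shows $g_v$ has the right sign by differentiating down to $h_v(t)=\dfrac{1-t^v}{v}-(1-t)\ge0$. Your route via the integral representation $L_w(a,b)=\frac{1-w}{w}\int_{1-w}^{1}\varphi+\frac{w}{1-w}\int_{0}^{1-w}\varphi$ reduces the claim to the transparent statement that the average of the convex function $\varphi(x)=a^xb^{1-x}$ over $[0,1]$ dominates its average over the centred subinterval $[v,1-v]$; the symmetrisation $\psi(x)=\varphi(\tfrac12+x)+\varphi(\tfrac12-x)$ makes this a one-line monotonicity. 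This is more conceptual and avoids the algebraic bookkeeping. One caveat: your parenthetical alternative (``the integrand of the representation is convex in $w$'') is not justified as stated, since the limits of integration depend on $w$ and the weight function has a jump at $x=1-w$; but this side remark is not needed, as your main argument is complete.

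For the lower bound in \eqref{Relation_Lv_ineq02}, the paper rewrites the claim as $\log\dfrac{(1-v)t+v}{vt+(1-v)}\lessgtr(1-2v)\log t$ (depending on whether $v<\tfrac12$ or $v>\tfrac12$), and in each case proves the equivalent polynomial inequality $vt^{2-2v}+(1-v)t^{1-2v}\gtrless(1-v)t+v$ by two derivatives. Your observation that $(A_v(a,b),A_{1-v}(a,b))$ has the same arithmetic mean as $(a,b)$ but larger product, together with the Schur-concavity of $L$ (proved via the one-variable reduction $\tau\mapsto\frac{2\tau}{\log\frac{1+\tau}{1-\tau}}$), dispenses with the case split entirely. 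What the paper's approach buys is that it stays within the bare definition of $L_v$; what yours buys is a structural explanation and a reusable lemma (Schur-concavity of the logarithmic mean).
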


\begin{proof}
Since $L_v(a,b)\le A_v(a,b)$ and $L_{1-v}(a,b) \le A_{1-v}(a,b)$, we have
the second inequality of \eqref{Relation_Lv_ineq01}.
As for the first inequality of \eqref{Relation_Lv_ineq01} , it is sufficient to prove $f_v(t)\ge 0$ for $t>0$ and $0\le v \le 1$,
where $f_v(t):=\dfrac{1}{2}L_v(t)+\dfrac{1}{2}L_{1-v}(t)-L_{1/2}(t)$ and $L_v(t)$ is given in \eqref{Property_ineq01}. Since $\lim\limits_{v\to 0}L_v(t)=t$ and $\lim\limits_{v\to 1}L_v(t)=1$, for the special cases such as $v=0$ or $v=1$, $f_v(t)\ge 0$ is equivalent to $\dfrac{t+1}{2}\ge L(t)$ which is known. So we assume $0<v<1$.

By the simple calculations, we have
$$f_v(t)=\frac{g_v(t)}{2v(1-v)t^{v-1}\log t},\,\,\,\,\,\,g_v(t):=(1-2v)^2t^{v-1}(t-1)+(1-2v)(t^{2v-1}-1).$$
Then we have
$$
\frac{dg_v(t)}{dt}=v(1-2v)^2t^{v-2}h_v(t),\,\,\,\,\,\,h_v(t):=\frac{1-t^v}{v}-(1-t).
$$
Since $\dfrac{t^v-1}{v}\le t-1$ for $t>0$ and $0<v<1$, we have $h_v(t)\ge 0$ which implies $\dfrac{dg_v(t)}{dt}\geq 0$. Thus we have $g_v(t) \ge 0$ for $t \ge 1$ and $g_v(t)\le 0$ for $0<t\le 1$. Therefore we have $f_v(t)\ge 0$ for $t >0$ and $0\le v \le 1$.

The second inequality of \eqref{Relation_Lv_ineq02} can be easily proven by 
$$L(A_v(a,b),A_{1-v}(a,b))\le A(A_v(a,b),A_{1-v}(a,b))=A(a,b).$$
As for the first inequality of  \eqref{Relation_Lv_ineq02}, it is sufficient to prove the following inequality
\begin{equation}\label{Relation_Lv_ineq03}
\frac{t-1}{\log t}\le\frac{(1-2v)(t-1)}{\log\left\{(1-v)t+v\right\}-\log \left\{vt+(1-v)\right\}}
\end{equation}
for $t>0$ and $0\le v \le 1$.
Since the equality holds when $v=0,1/2,1$, we assume $0<v<1$ with $v\neq 1/2$.
\begin{itemize}
\item[(i)] For the case $t\ge 1$ and $0<v<1/2$, the inequality \eqref{Relation_Lv_ineq03} is equivalent to
$$
\log\frac{(1-v)t+v}{vt+(1-v)}\le (1-2v)\log t \Leftrightarrow (1-v)t+v\le vt^{2-2v}+(1-v)t^{1-2v}.
$$
So we set the function $k_v(t):=vt^{2-2v}+(1-v)t^{1-2v}-(1-v)t-v$.
Then we have $\dfrac{dk_v(t)}{dt}=2v(1-v)t^{1-2v}+(1-v)(1-2v)t^{-2v}-(1-v)$ and $\dfrac{d^2k_v(t)}{dt^2}=2v(1-v)(1-2v)t^{-2v-1}(t-1)\ge 0$.
Thus we have $\dfrac{dk_v(t)}{dt}\ge \dfrac{dk_v(1)}{dt}=0$ which implies
$k_v(t)\ge k_v(1)=0$. Therefore we have \eqref{Relation_Lv_ineq03} for $t\ge 1$  and $0<v<1/2$. Replacing $t:=1/s \ge 1$ in  \eqref{Relation_Lv_ineq03} and multiplying $s>0$ to both sides, we have
\begin{equation}\label{Relation_Lv_ineq04}
 \frac{s-1}{\log s}\le\frac{(1-2v)(1-s)}{\log\left\{\frac{(1-v)+vs}{s}\right\}-\log \left\{\frac{v+(1-v)s}{s}\right\}}
\end{equation}
for $0<s\le 1$ and $0<v<1/2$. 
Thus we have the inequality \eqref{Relation_Lv_ineq03} for $t>0$ and $0<v<\frac{1}{2}$. 
\item[(ii)] For the case $t\ge 1$ and $1/2<v<1$, the inequality \eqref{Relation_Lv_ineq03} is equivalent to
$$
\log\frac{(1-v)t+v}{vt+(1-v)}\ge  (1-2v)\log t \Leftrightarrow (1-v)t+v\ge  vt^{2-2v}+(1-v)t^{1-2v}.
$$
So we set the function $l_v(t):=(1-v)t+v-vt^{2-2v}-(1-v)t^{1-2v}$.
Then we have $\dfrac{dl_v(t)}{dt}=(1-v)-2v(1-v)t^{1-2v}-(1-v)(1-2v)t^{-2v}$ and $\dfrac{d^2l_v(t)}{dt^2}=-2v(1-v)(1-2v)t^{-2v-1}(t-1)\ge 0$.
Thus we have $\dfrac{dl_v(t)}{dt}\ge \dfrac{dl_v(1)}{dt}=0$ which implies
$l_v(t)\ge l_v(1)=0$. Therefore we have \eqref{Relation_Lv_ineq03} for $t\ge 1$  and $1/2<v<1$. By the similar way to the last part of (i), 
 we have the inequality \eqref{Relation_Lv_ineq03} for $t>0$ and $1/2<v<1$.  
\end{itemize}
From (i) and (ii), we have the inequality \eqref{Relation_Lv_ineq03} for $t>0$ and $0<v<1$.
\end{proof}

\begin{theorem}\label{Relation_GLv_prop01}
For $0\le v \le 1$ and $a,b>0$, we have
\begin{equation}\label{Relation_GLv_ineq01}
G(a,b)\le L\left(G_v(a,b),G_{1-v}(a,b)\right)\le L(a,b).
\end{equation}
and
\begin{equation}\label{Relation_GLv_ineq02}
G(a,b)\le G\left(L_v(a,b),L_{1-v}(a,b)\right) \le L(a,b)
\end{equation}
\end{theorem}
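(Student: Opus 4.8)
The statement packages four inequalities, and the two leftmost ones are immediate. Since $G\bigl(G_v(a,b),G_{1-v}(a,b)\bigr)=\sqrt{a^{1-v}b^{v}\cdot a^{v}b^{1-v}}=\sqrt{ab}=G(a,b)$, the lower bound in \eqref{Relation_GLv_ineq01} is just $G\le L$ from \eqref{order_four_means} applied to the pair $\bigl(G_v(a,b),G_{1-v}(a,b)\bigr)$; and using $G_w\le L_w$ from \eqref{order_four_means},
\[
G\bigl(L_v(a,b),L_{1-v}(a,b)\bigr)^{2}=L_v(a,b)L_{1-v}(a,b)\ge G_v(a,b)G_{1-v}(a,b)=ab=G(a,b)^{2},
\]
which is the lower bound in \eqref{Relation_GLv_ineq02}. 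All means involved are homogeneous of degree one, so the plan is to normalise and to exploit the symmetries coming from homogeneity together with $L_v(b,a)=L_{1-v}(a,b)$ (checked directly from \eqref{wlm_def}).

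For the upper bound $L\bigl(G_v(a,b),G_{1-v}(a,b)\bigr)\le L(a,b)$, I would put $b=1$, $t=a$; by the above symmetries the inequality is invariant under $t\mapsto 1/t$ and under $v\mapsto 1-v$, so I may assume $t\ge 1$ and $0\le v\le\tfrac12$. The case $t=1$ is trivial and $v=\tfrac12$ reduces to the known $G(t,1)\le L(t,1)$; otherwise $L(t^{1-v},t^{v})=\frac{t^{1-v}-t^{v}}{(1-2v)\log t}$ and $L(t,1)=\frac{t-1}{\log t}$, so the claim becomes $t^{1-v}-t^{v}\le(1-2v)(t-1)$. Fixing $t\ge 1$, the function $\phi(v):=t^{1-v}-t^{v}-(1-2v)(t-1)$ satisfies $\phi(0)=\phi(\tfrac12)=0$ and $\phi''(v)=(\log t)^{2}(t^{1-v}-t^{v})\ge 0$ on $[0,\tfrac12]$, so $\phi$ is convex there and hence $\phi\le 0$. (Equivalently, from $L(a,b)=\int_0^1 G_u(a,b)\,du$ one recognises $L(G_v,G_{1-v})$ as the mean of the convex function $u\mapsto G_u(a,b)$ over the subinterval of $[0,1]$ with endpoints $\min\{v,1-v\}$, $\max\{v,1-v\}$, which is symmetric about $\tfrac12$, hence at most the mean over $[0,1]$.)

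For the upper bound $G\bigl(L_v(a,b),L_{1-v}(a,b)\bigr)\le L(a,b)$, i.e.\ $L_v(a,b)L_{1-v}(a,b)\le L(a,b)^{2}$, I would take $a=1$, $b=e^{c}$, $c\in\mathbb R$, and substitute $x:=e^{cv}$, $y:=e^{c(1-v)}$ (so $xy=e^{c}$) into \eqref{wlm_def}, which gives $L_v(1,e^{c})=\frac1c\bigl[\alpha(x-1)+\alpha^{-1}x(y-1)\bigr]$, $L_{1-v}(1,e^{c})=\frac1c\bigl[\alpha^{-1}(y-1)+\alpha y(x-1)\bigr]$ and $L(1,e^{c})=\frac1c(xy-1)$, where $\alpha:=\frac{1-v}{v}$. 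After cancelling the factor $c^{-2}$, expanding the product, and substituting $xy=\bigl(1+(x-1)\bigr)\bigl(1+(y-1)\bigr)$, the cubic and quartic terms organise so that the inequality reduces to
\[
(\alpha^{2}-1)(x-1)^{2}y+(\alpha^{-2}-1)(y-1)^{2}x\le 0 .
\]
Dividing by $xy>0$ and using $\dfrac{(e^{z}-1)^{2}}{e^{z}}=4\sinh^{2}\!\bigl(z/2\bigr)$ turns this into $(\alpha^{2}-1)\bigl(\alpha^{2}\sinh^{2}(cv/2)-\sinh^{2}(c(1-v)/2)\bigr)\le 0$; the claim being invariant under $c\mapsto-c$, it suffices to take $c\ge 0$, where for $v\le\tfrac12$ it reads $\frac{1-v}{v}\sinh(cv/2)\le\sinh(c(1-v)/2)$, i.e.\ $\frac{\sinh(cv/2)}{v}\le\frac{\sinh(c(1-v)/2)}{1-v}$ (and the reverse for $v\ge\tfrac12$). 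This follows from the monotonicity of $u\mapsto\dfrac{\sinh u}{u}$ on $(0,\infty)$ together with $v\le 1-v$.

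The only genuinely delicate step is the algebraic collapse in the last argument: the two bracketed factors from \eqref{wlm_def} must be expanded and the substitution for $xy$ made at the right moment so that all terms of degree $\ge 3$ cancel, leaving the one-line inequality above. Everything else — the symmetry reductions ($t\mapsto 1/t$, $v\mapsto 1-v$, $c\mapsto-c$), the convexity of $\phi$, and the final $\sinh$ estimate — is routine.
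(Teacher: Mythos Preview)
Your proof is correct.  The lower bounds and the reduction of the first upper bound to $t^{1-v}-t^{v}\le(1-2v)(t-1)$ for $t\ge1$, $0\le v\le\tfrac12$ coincide with the paper's argument; the paper then differentiates in $t$ (showing $f_v(t):=(1-2v)(t-1)-t^{1-v}+t^{v}$ has $f_v''(t)\ge0$ and $f_v'(1)=0$), whereas you differentiate in $v$ and use convexity together with $\phi(0)=\phi(\tfrac12)=0$.  Both are short; your integral remark (the mean of a convex function over a subinterval symmetric about the midpoint is at most the mean over the full interval, by Hermite--Hadamard) adds a conceptual explanation the paper does not give.  For the second upper bound the routes are genuinely different: the paper sets $g_v(t):=(t-1)^{2}-L_v(t)L_{1-v}(t)(\log t)^{2}$ and proves $g_v(t)\ge0$ by differentiating in $t$ through an auxiliary function $h_v$ up to its third derivative.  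Your substitution $x=e^{cv}$, $y=e^{c(1-v)}$, $\alpha=(1-v)/v$ collapses $L_vL_{1-v}\le L^{2}$ to $(\alpha^{2}-1)(x-1)^{2}y+(\alpha^{-2}-1)(y-1)^{2}x\le0$ (the algebra indeed works: writing $xy-1=(x-1)+x(y-1)=(y-1)+y(x-1)$ makes the cross terms cancel), which after the identity $(e^{z}-1)^{2}/e^{z}=4\sinh^{2}(z/2)$ reduces to the monotonicity of $u\mapsto(\sinh u)/u$.  This is considerably cleaner and avoids higher-order derivatives; the paper's computation is more mechanical, requiring no clever change of variables but more bookkeeping.
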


\begin{proof}
The first inequality of \eqref{Relation_GLv_ineq01} is easily proven by
$$L\left(G_v(a,b),G_{1-v}(a,b)\right) \ge G\left(G_v(a,b),G_{1-v}(a,b)\right)=G(a,b).$$

To prove the second inequality of \eqref{Relation_GLv_ineq01}, we
 set the function (since $L\left(G_v(a,b),G_{1-v}(a,b)\right)= L(a,b)$ for $v=1/2$)
 $$
 f_v(t):=(1-2v)(t-1)-t^{1-v}+t^v,\quad t\ge 1,\quad 0\le v<1/2.
 $$
 Since 
 $$
 \frac{df_v(t)}{dt}=(1-2v)-(1-v)t^{-v}+vt^{v-1},\quad  \frac{d^2f_v(t)}{dt^2}=v(1-v)t^{-v-1}(1-t^{2v-1})\geq 0,
 $$
 we have $ \frac{df_v(t)}{dt}\ge  \frac{df_v(1)}{dt}=0$ which implies
 $f_v(t)\ge f_v(1)=0$. By the similar way, we can prove
 $t^{1-v}-t^v-(1-2v)(t-1)\ge 0$ for $t \ge 1$ and $1/2<v\le 1$.
 Thus we have
 \begin{equation}\label{Relation_GLv_ineq03}
 \frac{t^{1-v}-t^v}{(1-2v)\log t}\le \frac{t-1}{\log t},\quad t\ge 1,\,\,\,0\le v \le 1.
 \end{equation}
 Putting $t:=1/s \ge 1$, we obtain
 $$
  \frac{s^{1-v}-s^v}{(1-2v)\log s}\le \frac{s-1}{\log s},\quad 0<s \le 1,\,\,\,0\le v \le 1.
 $$
 Thus we have
 \begin{equation}\label{Relation_GLv_ineq04}
  \frac{t^{1-v}-t^v}{(1-2v)\log t}\le \frac{t-1}{\log t},\quad t> 0,\,\,\,0\le v \le 1.
 \end{equation}
 Putting $t:=a/b$ in \eqref{Relation_GLv_ineq04} and multiplying $b>0$ to the both sides, we obtain the second inequality of \eqref{Relation_GLv_ineq01}.
 
 The first inequality of \eqref{Relation_GLv_ineq02} is easily proven by
$$G\left(L_v(a,b),L_{1-v}(a,b)\right) \ge G\left(G_v(a,b),G_{1-v}(a,b)\right)=G(a,b).$$
 
 Since $G(L_v(a,b),L_{1-v}(a,b))\le L(a,b)$ is equivalent to $L_v(t)L_{1-v}(t)\le L_{1/2}(t)^2$ because we put $t:=a/b>0$ and multiply $b^2>0$ to both sides. Since $(\log t)^2>0$ for all $t>0$, we set the function
 $$
 g_v(t):=(t-1)^2-\left\{\frac{1-v}{v} \left(t-t^{1-v}\right) +\frac{v}{1-v}\left(t^{1-v}-1\right)\right\}\left\{\frac{v}{1-v} \left(t-t^{v}\right) +\frac{1-v}{v}\left(t^{v}-1\right)\right\}.
 $$
 Since $g_v(t)=g_{1-v}(t)$ and $g_v(t)\ge 0,\,\,\,(t\ge 1)$ implies $g_v(s)\ge 0,\,\,\,(0< s\le 1)$ by putting $t:=1/s\ge 1$, we have only to prove $g_v(t)\ge 0$ for $t \ge 1$ and $0\le v \le 1/2$.
 Then we calculate
\begin{eqnarray*}
 &&\frac{dg_v(t)}{dt}=\frac{(1-2v)t^{-v-1}}{v^2(1-v)^2}\left\{(v-1)^3t+v^2(2-v)t^2+v^3t^{2v}-(1+v)(1-v)^2t^{2v+1}+2(1-2v)t^{v+1}\right\},\\
&& \frac{d^2g_v(t)}{dt^2}=\frac{(1-2v)t^{-v-2}}{v(1-v)}h_v(t),\quad
 h_v(t):=(1-v)^2t+v(2-v)t^2-v^2t^{2v}-(1-v)(1+v)t^{2v+1}.
\end{eqnarray*}
 We further calculate
 \begin{eqnarray*}
 && \frac{dh_v(t)}{dt}=(1-v)^2+2v(2-v)t-2v^3t^{2v-1}-(1-v)(1+v)(1+2v)t^{2v},\\
 && \frac{d^2h_v(t)}{dt^2}=2v\left(2-v\right)-2v(1-v)(1+v)(1+2v)t^{2v-1}+2v^3(1-2v)t^{2v},\\
 &&\frac{d^3h_v(t)}{dt^3}=2v(1-v)(1-2v)t^{2v-3}\left((v+1)(2v+1)t-2v^2\right)\ge 0,\quad \left(t\ge 1,\,\,\,\,0\le v \le 1/2\right).
 \end{eqnarray*}
 Thus we have $\dfrac{d^2h_v(t)}{dt^2} \ge \dfrac{d^2h_v(1)}{dt^2}=v(1-v)(1-2v)\geq 0$ which implies $\dfrac{dh_v(t)}{dt} \ge \dfrac{dh_v(1)}{dt}=0$.
 So we have $h_v(t)\ge h_v(1)=0$ which means $\dfrac{d^2g_v(t)}{dt^2}\ge 0$ which implies $\dfrac{dg_v(t)}{dt}\ge \dfrac{dg_v(1)}{dt}=0$. Therefore we have $g_v(t)\ge g_v(1)=0$.
\end{proof}

In the end of this section, we state some operator inequalities for the essential scalar inequalities which were obtained above. To this end, we give a notation for a self-adjoint operator $A$. If a self-adjoint operator $A$ satisfies $\langle Ax,x\rangle \ge 0$ for all vectors $x\neq 0$, then $A$ is called a positive operator, and we use the notation $A\ge 0$. If $\langle Ax,x\rangle > 0$ for all vectors $x\neq 0$, then $A$ is called strictly positive operator, and we use the notation $A>0$. It is known that the scalar order is equivalent to the operator partial order by Kubo-Ando theory \cite{KA}. Therefore it is often important to obtain a new scalar inequality. To express the logarithmic mean, we use the integral form such as
$L(t,1)=L_{1/2}(t)=\int _0^1 t^xdx$ and $L_v(t,1)=L_v(t)=\dfrac{v}{1-v}\int_0^{1-v} t^xdx +\dfrac{1-v}{v}\int_{1-v}^1 t^xdx$.
Since \eqref{Relation_GLv_ineq04} is rewritten as
$$
\frac{1}{1-2v}\int_v^{1-v}t^xdx \le \int _0^1 t^xdx,
$$
we have  for $A,B>0$
$$
\frac{1}{1-2v}\int_v^{1-v}A\sharp_xB dx \le \int_0^1 A\sharp_xB dx, \quad 0\le v \le 1
$$
by putting $t:=A^{-1/2}BA^{-1/2}$ and multiplying $A^{1/2}$ to both sides, where 
$$
A\sharp_xB := A^{1/2}\left(A^{-1/2}BA^{-1/2}\right)^xA^{1/2},\quad 0\le x \le 1
$$ 
is the weighted operator geometric mean for $A,B>0$. In the proof of Theorem \ref{Relation_GLv_prop01}, we proved $L_v(t)L_{1-v}(t)\le L_{1/2}(t)^2$ which is also rewritten as
$$
\left(\frac{v}{1-v}\int_0^{1-v}t^xdx +\frac{1-v}{v}\int_{1-v}^1t^xdx\right)
\left(\frac{1-v}{v}\int_0^{v}t^xdx +\frac{v}{1-v}\int_{v}^1t^xdx\right)\le \left(\int_0^1t^xdx\right)^2.
$$
Thus we similarly have  for $A,B>0$
$$
\left(A\ell_vB\right)A^{-1} \left(A\ell_{1-v}B\right)\le \left(A\ell B\right)A^{-1} \left(A\ell B\right),\quad 0\le v \le 1
$$
where the weighted operator logarithmic mean is defined by
$$
A\ell_vB:=\frac{v}{1-v}\int_0^{1-v}A\sharp_xBdx +\frac{1-v}{v}\int_{1-v}^1A\sharp_xBdx
$$
and the operator logarithmic mean is written by $A\ell B:=A\ell_{1/2}B= \int_0^1 A\sharp_xB dx$.
Since we proved $L_{1/2}(t)\le \dfrac{1}{2}L_v(t)+\dfrac{1}{2}L_{1-v}(t)$ for $t>0$ and $0\le v \le 1$,
we have  for $A,B>0$
$$
A\ell B \le \frac{1}{2}A\ell_vB+\frac{1}{2}A\ell_{1-v}B,\quad 0\le v \le 1.
$$

From \eqref{theorem_heinz_ineq01}, we also have for $A,B>0$
$$
A\ell B\le \frac{1}{2}A\nabla_vB+\frac{1}{2}A\sharp_{1-v}B,\quad 0\le v \le 1,
$$
where $A\nabla_v B:=(1-v)A+vB$ is the weighted operator arithmetic mean.
From \eqref{refine_Young_ineq01}, we have for $A,B>0$
\begin{equation}\label{generalization_ZJ}
0\le K^*\left(A\sharp_vB\right)K\le A\nabla_vB-A\sharp_vB,\quad 0\le v \le 1,
\end{equation}
where $K:=\dfrac{\mu}{\sqrt{2}}A^{-1}S(A|B)$, $\mu:=\{1-v,v\}$ and $S(A|B):=A^{1/2}\log \left(A^{-1/2}BA^{-1/2}\right)A^{1/2}$ is known as the operator relative entropy \cite{FK1989}. We see that the second inequality in \eqref{generalization_ZJ} gives a generalization of \cite[Theorem 4.1]{ZJ2015}.
Furthermore, the inequalities \eqref{refine_Young_cor_ineq01} is equivalent to the inequalities:
$$
0\le \frac{\mu^2}{2}(\ln_rt)t^v(\ln_rt)\le (1-v)+vt-t^v
$$
under the conditions (i)$r>0$ and $0<t \le 1$, or (ii)$r<0$ and $t \ge 1$.
Therefore we have the following proposition.
\begin{proposition}
Under the conditions (i)$r>0$ and $0<B\le A$, or (ii)$r<0$ and $0<A\le B$, we have the following operator inequalities:
$$
0\le K_r^*\left(A\sharp_vB\right)K_r\le A\nabla_vB-A\sharp_vB,\quad 0\le v \le 1,
$$ 
where $K_r:=\dfrac{\mu}{\sqrt{2}}A^{-1}S_r(A|B)$, $\mu:=\{1-v,v\}$ and $S_r(A|B):=A^{1/2}\ln_r \left(A^{-1/2}BA^{-1/2}\right)A^{1/2}$ is known as the Tsallis operator relative entropy \cite{YKF2005}.
\end{proposition}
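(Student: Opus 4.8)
The plan is to lift to the operator setting the scalar inequality displayed immediately before the statement, namely
\[
0\le \frac{\mu^2}{2}\,(\ln_r t)\,t^v\,(\ln_r t)\le (1-v)+vt-t^v ,
\]
which holds under (i) $r>0$ and $0<t\le 1$, or (ii) $r<0$ and $t\ge 1$ (this is the second inequality of \eqref{refine_Young_cor_ineq01} in Corollary \ref{refine_Young_cor} after interchanging $v$ and $1-v$, which leaves $\mu=\min\{1-v,v\}$ unchanged). The mechanism is exactly the one producing \eqref{generalization_ZJ}: functional calculus on a suitable operator, followed by a congruence transformation by $A^{1/2}$.

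First I would put $T:=A^{-1/2}BA^{-1/2}>0$ and check how the hypotheses translate. If $r>0$ and $0<B\le A$, then $0<T\le I$, so $\sigma(T)\subseteq(0,1]$; if $r<0$ and $0<A\le B$, then $T\ge I$, so $\sigma(T)\subseteq[1,\infty)$. In both cases the scalar inequality above holds at every point of $\sigma(T)$, so by the monotonicity of the functional calculus
\[
0\le \frac{\mu^2}{2}\,(\ln_r T)\,T^v\,(\ln_r T)\le (1-v)I+vT-T^v .
\]
Next I would apply $X\mapsto A^{1/2}XA^{1/2}$, which preserves the operator order. The right-hand side becomes $(1-v)A+vB-A\sharp_vB=A\nabla_vB-A\sharp_vB$, using $A^{1/2}TA^{1/2}=B$ and $A\sharp_vB=A^{1/2}T^vA^{1/2}$. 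For the middle term one uses the bookkeeping identity $S_r(A|B)=A^{1/2}(\ln_r T)A^{1/2}$, so that $K_r=\frac{\mu}{\sqrt2}A^{-1}S_r(A|B)=\frac{\mu}{\sqrt2}A^{-1/2}(\ln_r T)A^{1/2}$ and $K_r^*=\frac{\mu}{\sqrt2}A^{1/2}(\ln_r T)A^{-1/2}$; multiplying out and cancelling $A^{-1/2}A^{1/2}=I$ gives
\[
K_r^*\left(A\sharp_vB\right)K_r=\frac{\mu^2}{2}\,A^{1/2}(\ln_r T)\,T^v\,(\ln_r T)\,A^{1/2},
\]
which is precisely the congruence applied to the middle term. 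Combining the three pieces yields the asserted chain; the leftmost inequality is in any case immediate since $A\sharp_vB\ge 0$ and a congruence of a positive operator is positive.

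The step I expect to demand the most care is matching the \emph{direction} of the spectral condition on $T$ with the interval on which the scalar inequality is valid: for $r<0$ the admissible range in Corollary \ref{refine_Young_cor} is $t\ge 1$, which is exactly why case (ii) of the proposition assumes $A\le B$ (so that $T\ge I$) rather than $A\ge B$, and symmetrically for $r>0$. Once the hypotheses are correctly aligned, what remains is the routine functional-calculus-and-congruence argument already used for the operator statements earlier in this section, together with the identity $S_r(A|B)=A^{1/2}(\ln_r T)A^{1/2}$, and I do not anticipate any further obstacle.
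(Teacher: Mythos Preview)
Your proof is correct and follows exactly the route the paper intends: it takes the scalar inequality displayed just before the proposition (obtained from Corollary~\ref{refine_Young_cor} after swapping $v\leftrightarrow 1-v$), applies functional calculus to $T=A^{-1/2}BA^{-1/2}$ once the hypotheses are translated into the spectral conditions $\sigma(T)\subseteq(0,1]$ or $[1,\infty)$, and then conjugates by $A^{1/2}$, with the algebraic identification $K_r^*(A\sharp_vB)K_r=\tfrac{\mu^2}{2}A^{1/2}(\ln_rT)T^v(\ln_rT)A^{1/2}$ verified exactly as for \eqref{generalization_ZJ}. The paper itself omits these details and simply writes ``Therefore we have the following proposition''; your write-up fills them in accurately.
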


The other obtained scalar inequalities give the corresponding operator inequalities. However, we omit them.

\section{Concluding remarks}
Related to \eqref{conc01}, we have the following result which does not contradicts with \eqref{conc02} and \eqref{conc03}.
\begin{proposition}\label{prop3.1}
For $a,b>0$ and $0\le v \le 1$, we have the following inequalities.
\begin{itemize}
\item[(i)] For $0\le v \le 1/2$ and $a \ge b$, we have
\begin{equation}\label{conc04}
L(a,b)\le\frac{1}{2}A_v(a,b)+\frac{1}{2}G_v(a,b).
\end{equation}
\item[(ii)] $ 1/2\le v \le 1$ and $a \le b$, we also have
\begin{equation}\label{conc05}
L(a,b)\le\frac{1}{2}A_v(a,b)+\frac{1}{2}G_v(a,b).
\end{equation}
\end{itemize}
\end{proposition}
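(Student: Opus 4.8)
The plan is to obtain both inequalities directly from the bound \eqref{sec01_ineq01} and the monotonicity of the representing function $L_v(t)$ proved in Proposition \ref{prop_monotone}; the case split in the statement is precisely what is needed to align the direction of that monotonicity with the sign of $t-1$. The starting point is the homogeneity relations recorded after \eqref{Property_ineq01}: $b\,L_v(a/b)=L_v(a,b)$, and in particular $L(a,b)=L_{1/2}(a,b)=b\,L_{1/2}(a/b)$ since $L_{1/2}(t)=\frac{t-1}{\log t}$.

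For part (i), set $t:=a/b$, so that $a\ge b$ gives $t\ge 1$. By Proposition \ref{prop_monotone}, $v\mapsto L_v(t)$ is decreasing on $[0,1]$ when $t\ge 1$, hence $L_{1/2}(t)\le L_v(t)$ for every $v\in[0,1/2]$. Multiplying by $b>0$ yields $L(a,b)\le L_v(a,b)$, and then \eqref{sec01_ineq01} gives
$$
L(a,b)\le L_v(a,b)\le \frac{1}{2}G_v(a,b)+\frac{1}{2}A_v(a,b),
$$
which is \eqref{conc04}. For part (ii), $a\le b$ forces $t=a/b\le 1$; now Proposition \ref{prop_monotone} says $v\mapsto L_v(t)$ is increasing on $[0,1]$, so for $v\in[1/2,1]$ we again get $L_{1/2}(t)\le L_v(t)$, hence $L(a,b)\le L_v(a,b)$, and \eqref{sec01_ineq01} finishes the proof of \eqref{conc05}. (Part (ii) can alternatively be deduced from part (i) by the substitution $a\leftrightarrow b$, $v\leftrightarrow 1-v$, using $L(a,b)=L(b,a)$, $A_v(a,b)=A_{1-v}(b,a)$ and $G_v(a,b)=G_{1-v}(b,a)$.)

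Since every ingredient is already available in the paper, there is no real obstacle in this argument: the substantive work was carried out in Proposition \ref{prop_monotone}. The only point requiring attention is bookkeeping — checking that for $t\ge 1$ the relevant comparison is $v\le 1/2$ and for $t\le 1$ it is $v\ge 1/2$, which is exactly the hypothesis pattern in (i) and (ii), and noting that the degenerate case $t=1$ is trivial because all three means equal $a=b$.
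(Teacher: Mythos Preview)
Your argument is correct and is essentially identical to the paper's own proof: both deduce $L(a,b)=L_{1/2}(a,b)\le L_v(a,b)$ from the monotonicity in Proposition~\ref{prop_monotone} (with the case split on $t=a/b$ matching the sign conditions in (i) and (ii)), and then apply \eqref{sec01_ineq01}. The only addition is your parenthetical symmetry observation for part (ii), which the paper does not mention.
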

\begin{proof}
\begin{itemize}
\item[(i)] For the case $a/b=:t \ge 1$, from Proposition \ref{prop_monotone}, for $0\le v \le 1/2$ we have $L_{1/2}(t,1)\le L_v(t,1)$ which implies $L(a,b)=L_{1/2}(a,b)\le L_v(a,b)$. Thus we have $L(a,b)\le L_v(a,b)\le \dfrac{1}{2}A_v(a,b)+\dfrac{1}{2}G_v(a,b)$ for $0\le v \le 1/2$ from  \eqref{sec01_ineq01}.
\item[(ii)] For the case $a/b=:t \le 1$, from Proposition \ref{prop_monotone}, we have similarly $L(a,b)=L_{1/2}(a,b)\le L_v(a,b)$ for $1/2\le v \le 1$. Thus we have $L(a,b)\le L_v(a,b)\le \dfrac{1}{2}A_v(a,b)+\dfrac{1}{2}G_v(a,b)$ for $1/2\le v \le 1$ from  \eqref{sec01_ineq01}.
\end{itemize}
\end{proof}

It is quite natural to consider the maximum (optimal) value $p$ such that
\begin{equation}\label{sec01_ineq03}
 L_v(a,b) \le (1-p) A_v(a,b)+p G_v(a,b),
\end{equation}

By the numerical computation shows that
$$(1-p) A_v(a,b)+p G_v(a,b)- L_v(a,b) \simeq -1.39948\times 10^{-8}$$
when $a:=10^{-10}$, $b:=1$, $v:=1-10^{-10}$ and $p=\dfrac{13}{25}$. This means the inequality \eqref{sec01_ineq03} does not hold when $p=\dfrac{13}{25}>\dfrac{1}{2}.$

We close this paper with the following conjecture.
From \eqref{conjecture01} with several numerical computations indicate
that the inequality seems to be true
\begin{equation}\label{conjecture02}
L(a,b)\le L(L_v(a,b),L_{1-v}(a,b)),\quad a,b>0,\quad 0\le v \le 1. 
\end{equation}
However we have not proven this inequality due to its complicated computations, and we also have not found any counter-examples.
If the conjectured inequality \eqref{conjecture02} will be shown, then it will give a tight inequality for the first inequalities in \eqref{Relation_Lv_ineq01} and \eqref{Relation_Lv_ineq02}.

\section*{Acknowledgement}
The author (S.F.) was partially supported by JSPS KAKENHI Grant Number 21K03341.

\section*{Appendix: Proof of \eqref{sec01_ineq01} by elementary calculations}

As we noted, \eqref{sec01_ineq01} has been proven already by the use of the Hermite-Hadamard inequality for convex function. From the point of self-sufficiency,  we give a direct and an elementary proof for \eqref{sec01_ineq01}.

{\it Proof of \eqref{sec01_ineq01}:}
To prove \eqref{sec01_ineq01}, we firstly prove $f_v(t) \ge 0$ for $t \ge 1$ and $0\le v \le 1$, where
$$
f_v(t):=v(1-v)t^{1-v}\log t+v(1-v)\left\{(1-v)t+v\right\}\log t-2\left\{(1-v)^2(t-t^{1-v})+v^2(t^{1-v}-1)\right\}.
$$
Then we have $f_v'(t)=\dfrac{(1-v)}{t^v}g_v(t)$, where
$$
g_v(t):=(3v-2-v^2)t^v+v^2t^{v-1}+(2-3v)+v(1-v)(1+t^v)\log t.
$$
Then we also have $g_v'(t)=\dfrac{v(1-v)}{t}h_v(t)$, where
$$
h_v(t):=1-vt^{v-1}-(1-v)t^v+vt^v\log t.
$$
Since we have $h_v'(t)=vt^{v-2}\left\{(1-v)+vt+vt\log t\right\}\geq 0$ for 
$t \ge 1$ and $0\le v \le 1$, $h_v(t)\ge h_v(1)=0$. Thus we have $g_v'(t)\ge 0$ which implies $g_v(t)\ge g_v(1)=0$. Therefore we have $f_v'(t)\ge 0$ so that we have $f_v(t)\ge f_v(1)=0$.

We secondly prove $k_v(t)\ge 0$ for $0< t \le 1$ and  $0\le v \le 1$, where
$$
k_v(t):=2\left\{(1-v)^2(t-t^{1-v})+v^2(t^{1-v}-1)\right\}-v(1-v)t^{1-v}\log t-v(1-v)\left\{(1-v)t-v\right\}\log t.
$$
Then we have $k_v'(t)=\dfrac{(1-v)}{t^v}l_v(t)$, where
$$
l_v(t):=(2-3v+v^2)t^v-v^2t^{v-1}+(3v-2)-v(1-v)(1+t^v)\log t.
$$
Then we also have $l_v'(t)=\dfrac{v(1-v)}{t}\left(m_v(t)-vt^v\log t\right)$,
where
$$
m_v(t):=(1-v)t^v+vt^{v-1}-1.
$$
Since $m_v'(t)=v(1-v)t^{v-2}(t-1)\le 0$ for $0< t \le 1$ and $0\le v \le 1$, we have $m_v(t)\ge m_v(1)=0$. Thus we have $l_v'(t)\ge 0$  for $0< t \le 1$ and $0\le v \le 1$ so that we have $l_v(t)\le l_v(1)=0$ which implies $k_v'(t) \le 0$. Therefore   for $0< t \le 1$ and $0\le v \le 1$, we have $k_v(t)\ge k_v(1)=0$.
\hfill \qed

\end{document}